\newtheorem{thm}{Theorem}
\newtheorem{lem}[thm]{Lemma}
\newtheorem{pro}[thm]{Proposition}
\newtheorem{rmq}[thm]{Remark}
\begin{document}

\title[On some ternary Diophantine equations of Signature $(p,p,k)$.]{On some ternary Diophantine equations of Signature $(p,p,k)$.}

\author[Armand Noubissie]{Armand Noubissie}
\address{Institut de Math\'ematiques et de Sciences Physiques. Dangbo, B\'enin}
\email{armand.noubissie@imsp-uac.org}  

\author[Alain Togb\'e]{Alain Togb\'e}
\address{Department of Mathematics, Statistics and Computer Science\\
Purdue University Northwest\\
1401 S, U.S. 421, Westville IN 46391 USA}
\email{atogbe@pnw.edu}

\subjclass[2010]{11D41, 11D61}

\keywords{modular forms, elliptic curves, Galois representations, Fermat equation, Exponential Diophantine equation.}

\date{\today}

\maketitle

\begin{abstract}
In this paper, we summarize the work on ternary Diophantine equation of the form $Ax^n+By^n=cz^m$, where $m \in \{2,3,n\} $,  $n\geqslant 7 $ is a prime. Moreover, we completely solve some particular cases ($A=5^{\alpha}, ~B=64,~ c=3, ~m=2; \quad A=2^{\alpha},~ B=27, ~c \in \{7,13\}, ~m=3$). 
\end{abstract}

\section{Introduction}\label{sec1}
Let $p\geq 3$ be a prime number and  Fermat's equation 
\begin{equation}\label{eq:1}
a^p+b^p+c^p=0.
\end{equation}  One knows by Wiles (See \cite{W}) that this equation has no nonzero integral solution. Indeed, suppose that equation \eqref{eq:1} has a nonzero integral solution $(a,b,c)$. One consider the Frey curve $E$ whose equation is $y^2= x(x-a^p)(x+b^p).$ This curve is semistable (see Serre \cite{Se}, Proposition 6,  Section 4) and so it is modular by Wiles' theorem (see \cite{W}, Theorem 0.4). According to Ribet's theorem (see \cite{R}, Theorem 1.1), the Galois representation attached to Frey curve $E$ arise from a newform of weight $2$ and of level $2$, which is impossible because the genus of $X_0(2)$ is $0$. Therefore, no triple $(a,b,c)$ with the hypothesized properties.\\

This method is called the modular method. This method due to Serre, Frey, Ribet and Wiles consists in attaching a putative solution of a Diophantine equation to an elliptic curve $E$ (known as a Frey curve) and to study the Galois representation associated to $E$ via modularity and lowering.  Hence, this Galois representation arises to a newform of weight $2$ and small level and to conclude that such newform doesn't exist.  Notice that the modular method can be adapted to other Diophantine equations of the form
\begin{equation}\label{eq:2}
Ax^p + By^q= Cz^r,
\end{equation} 
for $p,q,r$ positive integers. We will refer to the triple $(p,q,r)$ as the signature  of the corresponding equation. In the case  $(p,p,p) $,  the work of Serre (see \cite{Se}, Theorem 2,  Section 4.3) combined with Ribet's theorem (see \cite{R}, Theorem 1.1) and Wiles' theorem (See \cite{W}, Theorem 0.4), provides the information for 
$$ABC \in \{1,2,3,5,7,11,13,17,19,23,53,59\}.$$ 
Kraus \cite{K1} gave the information for $ABC=15$; Darmon and Merel \cite{DM} for $ABC=2$ and Ribet \cite{R1} for $ABC= 2^{\alpha}$, with $\alpha > 1$. In 1997, Kraus \cite{K2} provided a sufficient criteria for $(A,B,C)$ to guarantee that such an equation \eqref{eq:2} with signature $(p,p,p)$ is insoluble in coprime nonzero integers $(x,y,z)$. Darmon and Granville \cite{DG} showed that  equation \eqref{eq:2} has only finitely many solutions in coprime integers $(x,y,z)$, for $A,B,C,p,q,r$ fixed positive integers such that $p^{-1}+ q^{-1}+r^{-1}< 1$. In case of the signatures $(p,p,2)$ and $(p,p,3)$, a work of Darmon and Merel \cite{DM} provided a comprehensive analysis with $ABC=1$ and Ivorra \cite{I} for $ABC=2$ (case $(p,p,2)$). In 2003, Bennett and Skinner \cite{BS} applied the techniques of Darmon and Merel to the equation of signature $(p,p,2)$ for arbitrary coefficients $A,B,C.$   In 2004, Bennett, Vatsal and Yazdani \cite{BVY} applied the techniques of Bennett and Skinner \cite{BS} in the case of signature $(p,p,3)$.

In this paper, we will provide recipes for solving equation \eqref{eq:2} under some very special conditions in cases $(p,p,2)$ and $(p,p,3)$. Our main theorems are the following.

\begin{thm} \label{thm1}
	Suppose that $n\geqslant 7$ is a prime number. Then the equation
\begin{equation}\label{eq:3}
	5^{\alpha}x^n + 64y^n= 3z^2
\end{equation} has no solution in nonzero coprime integers $(x,y,z)$ with $xy \equiv 1 \pmod 2$ and a positive integer $\alpha$.  
\end{thm}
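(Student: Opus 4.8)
The plan is to attack the signature $(n,n,2)$ equation \eqref{eq:3} by the modular method, following closely the framework developed by Bennett and Skinner \cite{BS} for $Ax^n + By^n = Cz^2$. First I would normalize the exponent: writing $\alpha = nq + r$ with $0\le r\le n-1$, one has $5^{\alpha}x^n = 5^{r}\bigl(5^{q}x\bigr)^n$, so absorbing the perfect $n$-th power into $x$ lets me assume $0\le \alpha \le n-1$ (the residue $\alpha\equiv 0 \pmod n$ reduces to the $A=1$ equation, which the same machinery handles). Next I record the local data forced by the hypotheses: since $x,y$ are odd, reduction of \eqref{eq:3} modulo $2$ shows $z$ is odd, and reduction modulo $8$ pins down $x \bmod 8$ in terms of the parity of $\alpha$. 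These congruences select the correct Frey curve from the Bennett--Skinner list and arrange, after a suitable quadratic twist, that the associated residual representation has trivial character.

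To a putative solution $(x,y,z)$ I would attach the Frey curve
\begin{equation*}
E:\quad Y^2 = X\bigl(X^2 + 2\cdot 3\,z\,X + 3\cdot 64\,y^n\bigr),
\end{equation*}
whose discriminant is, up to the normalizing factor, $\Delta = 2^{4}\cdot 64^{2}\cdot 3^{3}\cdot 5^{\alpha}\,x^n y^{2n}$, and compute the invariants $c_4,c_6$ and the minimal model. The decisive structural features are: the factor $64 = 2^{6}$ together with the oddness of $z$ forces $E$ to acquire good reduction at $2$; every prime $p\mid xyz$ with $p\notin\{2,3,5\}$ is a prime of multiplicative reduction with $n\mid v_p(\Delta)$; and $3,5$ contribute with bounded exponent (since $1\le\alpha\le n-1$ keeps $v_5(\Delta)\not\equiv 0 \pmod n$, so $5$ persists in the conductor). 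After invoking modularity of $E$ (Wiles \cite{W}) and the irreducibility of $\bar\rho_{E,n}$ for $n\ge 7$ — which follows, as in \cite{BS}, from the rational $2$-torsion of $E$, Mazur's bounds on rational isogenies, and the absence of complex multiplication — Ribet's level-lowering theorem \cite{R} yields that $\bar\rho_{E,n}$ arises from a weight-$2$ newform $f$ of level $N$ divisible only by $2,3,5$ and bounded independently of the solution; the exact value (e.g.\ $N=15$ under good reduction at $2$ and multiplicative reduction at $3,5$) is determined by the reduction analysis.

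The final and hardest step is to eliminate every newform at this level. Because $X_0(15)$ already has positive genus the lowered level is \emph{not} empty, so a bare genus argument fails; instead I would use that each surviving newform is rational and hence corresponds to an elliptic curve $E'/\mathbb{Q}$ of conductor $N$. The isomorphism of residual representations forces the congruence $a_\ell(E)\equiv a_\ell(E') \pmod{n}$ for all $\ell\nmid nN$. Running $\ell$ over a handful of small auxiliary primes and using that $a_\ell(E)$ lies in an explicitly computable finite set as $(x,y,z)$ ranges over residues modulo $\ell$ — with $a_\ell(E)\equiv \pm(\ell+1)\pmod n$ at primes of multiplicative reduction — I expect to produce, for each candidate $f$, a prime $\ell$ for which the congruence is impossible for every $n\ge 7$, giving the contradiction. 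I anticipate that the genuine obstacles are precisely this newform-elimination and the careful verification that the $2$-adic valuation $v_2(\Delta)=16$ together with $z$ odd yields \emph{good} rather than additive reduction at $2$, since an incorrect conductor at $2$ would enlarge the level and the space of newforms to be ruled out.
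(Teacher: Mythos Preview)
Your overall architecture (Frey curve, modularity, irreducibility, Ribet level-lowering, then elimination) is the same as the paper's, but two of the key steps are off, and one case is missing.

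\smallskip
\textbf{The lowered level is $45$, not $15$.} Because $C=3$ is the coefficient of $z^2$, the Frey curve has \emph{additive} reduction at $3$, not multiplicative: in the Bennett--Skinner recipe the factor contributed by primes dividing $C$ is $p^2$, so the Artin conductor is $2^{\beta}\cdot 3^{2}\cdot 5$. With $\mathrm{ord}_2(By^n)=\mathrm{ord}_2(64)=6$ the correct model is $E_3$, which has good reduction at $2$ (your intuition there is right), and one finds $N_n(E)=45$. Your guess of $N=15$ comes from treating $3$ like the primes dividing $AB$.

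\smallskip
\textbf{The congruence method does not eliminate the level-$45$ newform.} At level $45$ there is a single rational newform, attached to the isogeny class $45A$. This curve has a rational $2$-torsion point, so its coefficients $c_\ell$ are even and satisfy $|c_\ell|<2\sqrt{\ell}$; hence $c_\ell\in S_{\ell,2}$ for every good $\ell$, and the choice $a_\ell=c_\ell$ makes $n\mid(c_\ell-a_\ell)$ trivially. Thus your proposed ``compare $a_\ell$ at small $\ell$'' step cannot produce a contradiction uniform in $n$. The paper instead invokes the $j$-invariant criterion (its Proposition~\ref{pro2}, i.e.\ Bennett--Skinner Prop.~4.4): if $\rho_n^{E}\cong\rho_n^{E''}$ then no odd prime $p\neq n$ dividing $C$ can divide the denominator of $j(E'')$. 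Since $C=3$ and the $j$-invariant of $45A$ has denominator $15$, this kills the newform immediately for all $n\ge 7$. The underlying reason your method stalls while this one succeeds is local at $3$: the Frey curve has potentially good reduction at $3$ (one checks $v_3(j(E))\ge 0$), whereas $45A$ has potentially multiplicative reduction there, and this discrepancy is invisible to the $a_\ell$ for $\ell\nmid N$.

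\smallskip
\textbf{The case $xy=\pm 1$ is not covered.} Irreducibility of $\bar\rho_{E,n}$ via \cite{BS} requires $xy\neq\pm 1$, so you must treat $xy=\pm1$ separately. The paper does this by elementary means: for $xy=-1$ one gets $64-5^{\alpha}=\pm 3z^2$, rules out even $\alpha$ via a Pythagorean-type parametrization (its Proposition~\ref{pro3}), and finishes with a congruence mod $6$; for $xy=1$ a congruence mod $5$ suffices. Your proposal omits this endgame entirely.
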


\begin{thm} \label{thm2}
	Suppose that $n\geqslant 11$  is a prime number. Then the equation 
	\begin{equation}\label{eq:4}
	2^{\alpha}x^n + 27y^n= 7z^3
    \end{equation} has no solution in nonzero coprime integers $(x,y,z)$, where $\alpha$ is a positive integer. 
\end{thm}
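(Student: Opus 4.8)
The plan is to run the modular method for signature $(n,n,3)$, following the framework of Bennett, Vatsal and Yazdani \cite{BVY}, which transports the Bennett--Skinner techniques \cite{BS} to equations of the shape $Ax^n+By^n=Cz^3$. In our case $A=2^{\alpha}$, $B=27=3^{3}$ and $C=7$, so the only primes of bad reduction to be tracked throughout are $2$, $3$ and $7$. The first task is to reduce to finitely many configurations: writing $\alpha=qn+r$ with $0\le r<n$ we have $2^{\alpha}x^{n}=2^{r}\bigl(2^{q}x\bigr)^{n}$, so after renaming the variable we may assume $0\le\alpha\le n-1$, and a short analysis modulo small powers of $2$ and $3$ (using that $(x,y,z)$ are coprime) isolates the admissible residue classes of $x,y,z$ and hence which Bennett--Vatsal--Yazdani Frey model $E=E_{(x,y,z)}$ to attach to the putative solution. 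I would then compute its minimal discriminant and conductor; the $n$-th power contributions of $x$ and $y$ appear inside $\Delta(E)$, while $\operatorname{rad}(xy)$ enters $N(E)$ only through primes of multiplicative reduction.

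Next I would invoke modularity of $E$ (Wiles \cite{W} and its extensions) and establish irreducibility of the mod-$n$ representation $\bar\rho_{E,n}$; this is the point at which the hypothesis $n\ge 11$ is expected to intervene, since for the smallest exponents the representation could fail to be irreducible or could coincide with that of a small-conductor curve. Granting irreducibility, Ribet's level-lowering theorem \cite{R} shows that $\bar\rho_{E,n}$ arises from a weight-$2$ newform $f$ whose level $N_n$ is the prime-to-$n$ part of $N(E)$. Because the only bad primes are $2,3,7$, the level collapses to $N_n=2^{a}3^{b}\cdot 7$ for small exponents $a,b$ determined by the local conductor computation at $2$ and $3$; here the fact that $27$ is a perfect cube makes the $3$-adic valuation of $By^n$ congruent to $0$ modulo $3$ when $3\nmid y$, which controls $b$.

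With $N_n$ explicit I would enumerate the newforms in $S_2^{\mathrm{new}}(N_n)$ for each admissible level. For the rational newforms, corresponding to elliptic curves $E'/\mathbb{Q}$ of conductor $N_n$, the congruence $\bar\rho_{E,n}\cong\bar\rho_{E',n}$ forces $a_q(E)\equiv a_q(E')\pmod n$ at auxiliary primes $q$; evaluating the Frey-curve trace $a_q(E)$ over the possible residues of $(x,y,z)$ modulo $q$ bounds $q+1-a_q(E')$, so that $n$ divides a fixed nonzero integer which, for a suitable choice of $q$, has no prime factor $\ge 11$. The newforms with a nontrivial coefficient field, including any with complex multiplication by $\mathbb{Q}(\sqrt{-3})$ (relevant since $7\equiv 1\pmod 3$ splits there), would be eliminated by the standard coefficient-field argument comparing residual degrees. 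Combining these exclusions with $n\ge 11$ contradicts the existence of $f$, and hence of the solution.

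The principal obstacle is the elimination step together with the uniformity in $\alpha$: one must show that for every admissible level $N_n=2^{a}3^{b}\cdot 7$ and every branch of the preliminary $2$-adic analysis, no newform survives once $n\ge 11$. Getting the trace comparisons to produce bounds strictly below $11$ (so that the hypothesis closes precisely the gap left open at $n=7$) is delicate, and the variable coefficient $2^{\alpha}$ must be handled without enlarging the set of possible levels; in particular the case $2\mid x$, where the $2$-adic valuation of $2^{\alpha}x^n$ is large, has to be treated separately to keep the conductor at $2$ bounded.
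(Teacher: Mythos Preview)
Your overall strategy is the paper's strategy: attach the Bennett--Vatsal--Yazdani Frey curve $E'$ to a putative solution, level-lower, and kill the finitely many newforms at the resulting level. Two points need correction, one cosmetic and one substantive.

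First, the level. In the $(p,p,3)$ recipe the coefficient $C$ contributes $\prod_{p\mid C,\,p\neq 3}p^{2}$ to $N_n^{E'}$, not $p^{1}$; and since $\mathrm{ord}_3(B)=\mathrm{ord}_3(27)=3$ one has $\epsilon_3'=1$, so no power of $3$ survives. The only prime in $AB=2^{\alpha}\cdot 27$ other than $3$ is $2$, contributing a single factor $2$. Hence the lowered level is exactly
\[
N_n^{E'} \;=\; 1\cdot 7^{2}\cdot 2 \;=\; 98,
\]
independently of $\alpha$ and of whether $2\mid x$. There is no family $2^{a}3^{b}\cdot 7$ to sweep through and no separate $2$-adic branch; the reduction $\alpha\mapsto \alpha\bmod n$ is harmless but unnecessary. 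At level $98$ there are only two Galois orbits of newforms, and both die from the single Fourier coefficient $c_3$: the rational one has $c_3=2\notin\{\pm 4\}\cup S_{3,3}$, and the one with $c_3=\sqrt{2}$ gives $\lvert\mathrm{Norm}_{\mathbb{Q}(\sqrt{2})/\mathbb{Q}}(c_3-a_3)\rvert\le 14$, impossible for $n\ge 11$.

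Second, and this is the genuine gap: your plan never treats the case $xy=\pm 1$. The BVY level-lowering input (Lemma~3.4 of \cite{BVY}) is stated only for $xy\neq\pm 1$, because the irreducibility of $\bar\rho_{E',n}$ is established under that hypothesis; so the modular machine simply does not cover $x,y\in\{\pm 1\}$. The paper handles this case directly: setting $x=\pm 1$, $y=\pm 1$ turns \eqref{eq:4} into $(\pm 7z)^3 = 49\cdot 2^{\alpha}\pm 1323$, and after splitting $\alpha$ into even/odd one lands on the Mordell curves $Y^2=X^3\pm 1323$ and $Y^2=X^3\pm 10584$, whose integral points (found by Magma) yield no solution of the original equation. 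Without this endgame the proof is incomplete.
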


\begin{thm} \label{thm3}
Suppose that $n\geqslant 11,$ $n \neq 13$ is a prime number. Then the equation 
\begin{equation}\label{eq:5}
2^{\alpha}x^n + 27y^n= 13z^3
\end{equation} has no solution in nonzero coprime integers $(x,y,z)$, where $\alpha$ is a positive integer. 
\end{thm}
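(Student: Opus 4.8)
The plan is to settle \eqref{eq:5} by the modular method, following the recipe of Bennett, Vatsal and Yazdani \cite{BVY} for signature $(n,n,3)$; here the coefficients are $A=2^{\alpha}$, $B=27=3^{3}$ and $C=13$. I would begin with reductions. By absorbing any factor $2^{n}$ of $2^{\alpha}$ into $x^{n}$ one may replace $\alpha$ by its residue $r=\alpha\bmod n\in\{0,1,\dots,n-1\}$, so that it suffices to treat finitely many first coefficients $2^{r}$; only this residue (equivalently the $2$-adic valuation of the first summand) affects the Frey curve at $2$. Reducing \eqref{eq:5} modulo $2$ and using $\alpha\geq 1$ shows that the first summand is even, whence $y\equiv z\pmod 2$; coprimality of $(x,y,z)$ then forces $y$ and $z$ to be odd, which fixes the $2$-adic behaviour of the solution, and I would also record the congruences modulo $3$ and $13$ coming from \eqref{eq:5}.

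Next I would attach to a putative solution the Bennett--Vatsal--Yazdani Frey curve $E/\mathbb{Q}$ associated to $A x^{n}+B y^{n}=C z^{3}$. This curve carries a rational $3$-isogeny arising from the cube $z^{3}$, and its invariants $c_{4},c_{6}$ and minimal discriminant $\Delta$ are, up to sign and powers of $2,3,13$, products of $n$-th powers of $x,y$ and a power of $z$. The decisive structural feature is that, away from $\{2,3,13\}$, every prime $p\mid xyz$ divides $\Delta$ to a multiplicity that is a multiple of $n$, so $E$ has multiplicative reduction of "$n$-th power" type there. I would then compute the conductor $N(E)$ exactly by Tate's algorithm, tracking the exponents at $2$, $3$ and $13$; the fact that $B=27$ is a perfect cube keeps the exponent at $3$ bounded, while $\alpha\ge 1$ together with the parity of $y,z$ fixes the exponent at $2$.

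The curve $E$ is modular, and for $n\geq 11$ the representation $\overline{\rho}_{E,n}$ is irreducible: one rules out the finitely many possible rational $n$-isogenies (Mazur-type constraints) using the isogeny structure and the shape of $\Delta$ as in \cite{BVY}, and this is where the hypothesis $n\ge 11$ enters. Ribet's level-lowering theorem \cite{R} then produces a weight-$2$ newform $f$ of level $N_{n}$, where $N_{n}$ is obtained from $N(E)$ by deleting exactly the primes $p\mid xyz$ of $n$-th power type, leaving a level supported on $\{2,3,13\}$ with explicit bounded exponents, and $\overline{\rho}_{E,n}\cong\overline{\rho}_{f,\mathfrak{n}}$ for a prime $\mathfrak{n}\mid n$ of the Hecke field of $f$. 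Crucially $N_{n}$ is independent of the solution and of $n$.

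The final step is elimination. For each of the finitely many newforms $f$ at level $N_{n}$, the isomorphism $\overline{\rho}_{E,n}\cong\overline{\rho}_{f,\mathfrak{n}}$ gives, at every prime $\ell$ of good reduction, a congruence $a_{\ell}(E)\equiv a_{\ell}(f)\pmod{\mathfrak{n}}$; since the Frey curve forces $a_{\ell}(E)$ into an explicit finite set of residues (the admissible reduction types at $\ell$, each with $|a_{\ell}(E)|\le 2\sqrt{\ell}$), comparing these for a few small primes $\ell$ and taking norms yields a divisibility constraint on $n$ that is violated once $n\geq 11$. The single case where this argument breaks is $n=13$: there $n=C=13$ divides the level $N_{n}$, so the comparison at the prime $13$ no longer yields a contradiction and the corresponding newform cannot be discarded, which is exactly why the statement excludes $n=13$. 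I expect this elimination to be the main obstacle, since closing every case uniformly in $\alpha$ and for all $n\ge 11$ with $n\neq 13$ requires the full list of newforms at level $N_{n}$ together with a careful choice of auxiliary primes $\ell$.
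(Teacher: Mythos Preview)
Your overall strategy coincides with the paper's: attach the Bennett--Vatsal--Yazdani Frey curve, level-lower to a weight-$2$ newform of level $N_n^{E'}=2\cdot 13^{2}=338$ (since $\operatorname{ord}_3(B)=\operatorname{ord}_3(27)=3$ one has $\epsilon_3'=1$, so no factor of $3$ survives in the level), and then eliminate the finitely many newforms at level $338$ by comparing Hecke eigenvalues as in Proposition~\ref{pro1}. That part is correct in outline.

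There is, however, a genuine gap. The irreducibility of $\overline{\rho}_{E,n}$ that you invoke from \cite{BVY} is proved only under the hypothesis $xy\neq\pm 1$ (this is Lemma~\ref{lem3} above, i.e.\ Lemma~3.4 of \cite{BVY}); it is precisely the condition $xy\neq\pm 1$ that supplies enough primes of $n$-th power multiplicative reduction to exclude a rational $n$-isogeny. Your proposal never isolates or disposes of the boundary case $xy=\pm 1$, so the modular argument as written does not close the proof. In the paper this case is handled separately and elementarily: for $x=\pm 1$, $y=\pm 1$ the equation becomes $(\pm 13z)^{3}=169\cdot 2^{\alpha}\pm 4563$, which after splitting on the parity of $\alpha$ reduces to the Mordell curves $Y^{2}=X^{3}\pm 4563$ and $Y^{2}=X^{3}\pm 36504$, whose finitely many integral points (computed in \textsc{Magma}) yield no solution of the required shape.

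A smaller point: your explanation of why $n=13$ is excluded is not quite right. The issue is not that one ``compares at the prime $13$'' (one never takes $\ell=13$ in Proposition~\ref{pro1}, since $13\mid N$ for every $n$). Rather, for the two level-$338$ newforms with $c_{3}=\theta$ where $\theta^{3}-3\theta^{2}-4\theta+13=0$, the norms $\lvert\mathrm{Norm}_{K_f/\mathbb{Q}}(c_{3}-a_{3})\rvert$ lie in $\{7,13,83\}$, so the congruence argument eliminates all primes $n\geq 11$ except $n\in\{13,83\}$; the paper then kills $n=83$ using $c_{5}$, but $n=13$ genuinely survives the coefficient comparison, which is the actual reason for the hypothesis $n\neq 13$.
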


\begin{rmq}
	Notice that these two last theorems are also true if we replace respectively $7$ and $13$ by $7^{\beta}$ and  $13^{\beta}$, for any positive integer
	$\beta$.
\end{rmq}

\section{preliminaries}

We will always assume that $x,y,z,A,B,C$ are nonzero integers such that $Ax,By,Cz$ are coprime, $xy \neq \pm 1$ and satisfying 
$$Ax^n+By^n=cz^m,$$ 
where $n\geq 7$ is a prime in case $m=2$ or $n\geq 11$ is a prime in case $m=3$.

$\bullet$ {\bf Signature $(p,p,2)$.}
Without loss of generality, we will assume  that $Ax$ is odd, $C$ squarefree and we are in one of the following situations:
\begin{enumerate}
\item $abABC \equiv	1 \pmod 2$ and $b \equiv -BC \pmod 4$,
\item $ab \equiv	1 \pmod 2$ and either $\mbox{ord}_2(B)=1$ or $\mbox{ord}_2(C)=1$,
\item $ab \equiv	1 \pmod 2$, $\mbox{ord}_2(B)=2$ and $c \equiv -bB/4 \pmod 4$,
\item $ab \equiv 1 \pmod 2$, and either $\mbox{ord}_2(B)\in \{3,4,5\}$ and $c \equiv C \pmod 4$,
\item $\mbox{ord}_2(Bb^n)\geq 6$ and $c \equiv C \pmod 4.$
\end{enumerate}  
Now, we will consider the three elliptic curves $E_i(x,y,z)$ with $i \in \{1,2,3\}$ given in \cite{BS} by: 

$\star$ in case $(1)$ and $(2)$, we will consider 
$$(E_1): \quad Y^2 = X^3+2cCX^2+BCb^nX,$$  
$\star$ in case $(3)$ and $(4)$, 
$$(E_2): \quad Y^2 = X^3+cCX^2+\frac{BCb^n}{4}X$$ 
$\star$ and in case $(5)$, 
$$(E_3): \quad Y^2 + XY= X^3+\frac{cC-1}{4}X^2+\frac{BCb^n}{64}X.$$ 
They are Elliptic curves defined over $\mathbb{Q}$. Bennett and Skinner \cite{BS} used Tate's algorithm to give the value of the conductor associated to $E_i$ and summarized in the following lemma.
\begin{lem} \label{lem1}(Lemma 2.1 \cite{BS} )
The conductor $N(E)$ of the curve $E=E_i(x,y,z)$ is given by 
$$N(E)= 2^{\alpha}C^2\prod_{p \mid xyAB} p,$$
where
\begin{equation*}
\alpha = \left\{
\begin{aligned}
5 & \quad \mbox{if} \quad i=1, \quad  ABCxy \equiv 1 \pmod 2\\
6& \quad \mbox{if} \quad i=1, \quad  \mbox{ord}_2(B)=1 \quad \mbox{or} \quad \mbox{ord}_2(C)=1\\
1 &\quad \mbox{if} \quad i=2, \quad \mbox{ord}_2(B)=2 \quad \mbox{and} \quad y \equiv -BC/4 \pmod 4\\
2& \quad \mbox{if} \quad i=2, \quad  \mbox{ord}_2(B)=1 \quad \mbox{and} \quad y \equiv BC/4 \pmod 4\\
4& \quad \mbox{if} \quad i=2, \quad \mbox{ord}_2(B)=3 \\
2& \quad \mbox{if} \quad i=2, \quad \mbox{ord}_2(B)\in\{4,5\} \\
-1 &\quad \mbox{if} \quad i=3,\quad  \mbox{ord}_2(By^n)=6\\
0& \quad \mbox{if} \quad i=3, \quad \mbox{ord}_2(B)\geq 7.
\end{aligned}
\right.
\end{equation*}
\end{lem}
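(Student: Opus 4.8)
The plan is to compute $N(E)$ one prime at a time, writing $N(E)=\prod_p p^{f_p}$ and determining each local exponent $f_p$ from Tate's algorithm applied to a $p$-minimal model of $E=E_i(x,y,z)$ (I write $(a,b,c)$ for the solution $(x,y,z)$, matching the displayed curves, so the governing relation is $Aa^n+Bb^n=Cc^2$). First I would record the usual invariants. For $E_1$ one gets $b_2=8cC$, $b_4=2BCb^n$, $b_6=0$, and hence
\begin{equation*}
c_4(E_1)=16C\bigl(4Cc^2-3Bb^n\bigr),\qquad \Delta(E_1)=2^6B^2C^3b^{2n}\bigl(Cc^2-Bb^n\bigr).
\end{equation*}
The decisive simplification is to feed the equation back in: since $Cc^2-Bb^n=Aa^n$,
\begin{equation*}
\Delta(E_1)=2^6\,A B^2 C^3 a^n b^{2n}.
\end{equation*}
The same computation for $E_2$ and $E_3$ produces a discriminant of the identical shape, with the power $2^6$ replaced by $2^0$ and $2^{-12}$ respectively (the three models differ only by the $2$-adic rescaling dictated by $\operatorname{ord}_2(B)$, which is precisely why one switches between $E_1,E_2,E_3$). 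In particular $E$ has bad reduction exactly at the primes dividing $2ABCab$, and the three anticipated factors $2^\alpha$, $C^2$, and $\prod_{p\mid xyAB}p$ are already visible.

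Next I would dispatch the odd primes $p\ge 3$, where the work is light because the reduction is semistable away from $C$. For odd $p\mid ab$ or $p\mid AB$, the relation $Aa^n+Bb^n=Cc^2$ lets one reduce $4Cc^2-3Bb^n$ modulo $p$ and check, using coprimality, that $p\nmid c_4$; thus the reduction is multiplicative and $f_p=1$, which assembles the factor $\prod_{p\mid xyAB,\ p\ \mathrm{odd}}p$. For odd $p\mid C$ (recall $C$ is squarefree) the explicit factor $C$ in $c_4$ forces $p\mid c_4$, while $v_p(\Delta)=3<12$ shows the model is already $p$-minimal; the reduction is then additive of type $I_0^*$ (potentially good, $j$ a $p$-adic unit), so $f_p=2$ and one recovers $C^2$. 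The prime $p=3$ merits a separate glance---when $3\mid C$ the congruence $4Cc^2-3Bb^n\equiv -3Bb^n\equiv 0\pmod 3$ raises $v_3(c_4)$, and one must check that no wild contribution enters---but Tate's algorithm still returns $f_3=2$.

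The genuinely delicate part, and the main obstacle, is the prime $2$. The five configurations (1)--(5), together with the rule assigning $E_1$, $E_2$, or $E_3$, are engineered exactly so that the chosen model is $2$-integral and as close to $2$-minimal as the data allow, with a Kodaira type one can pin down. For each case I would run Tate's algorithm at $2$ on the designated curve; the congruences appearing in the hypotheses, such as $b\equiv -BC\pmod 4$ or $c\equiv C\pmod 4$, are precisely what is needed to control the branching of the algorithm and to decide whether a further translation or scaling can lower $v_2(\Delta)$. Reading off the resulting type gives the tabulated $\alpha$. One point worth flagging: in case (5) (curve $E_3$) one has $\operatorname{ord}_2(B)\ge 6$, so $2$ already contributes once to $\prod_{p\mid xyAB}p$ through $2\mid B$; the entries $\alpha=-1$ and $\alpha=0$ are then bookkeeping corrections recording that $E_3$ has, respectively, good and multiplicative reduction at $2$. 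Establishing $2$-minimality in these borderline situations---i.e.\ verifying that the model cannot be further reduced at $2$---is where the real effort sits, and it is exactly what the arithmetic conditions (1)--(5) are designed to certify.
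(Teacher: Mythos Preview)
The paper does not actually prove this lemma; it is quoted verbatim from Bennett--Skinner \cite{BS}, with only the remark that the computation is carried out ``using Tate's algorithm.'' Your outline---compute $c_4$ and $\Delta$, feed in $Cc^2-Bb^n=Aa^n$ to get $\Delta(E_1)=2^6AB^2C^3a^nb^{2n}$ (and its $2$-adic rescalings for $E_2,E_3$), read off multiplicative reduction at odd $p\mid abAB$ and additive type $I_0^{*}$ at odd $p\mid C$, then run Tate's algorithm at $2$ case by case---is exactly the argument given in \cite{BS}, and your discriminant formulas and the reading of $\alpha\in\{-1,0\}$ as bookkeeping against the factor of $2$ already present in $\prod_{p\mid xyAB}p$ are correct.
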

To the elliptic curve $E$, we associate a Galois representation  
$$\begin{array}{ccccc}
\rho_n^E : & Gal(\overline{\mathbb{Q}}\mid \mathbb{Q}) & \to & Gl_2(\mathbb{F}_n). \\
\end{array}$$
This is just the representation of $Gal(\overline{\mathbb{Q}}/\mathbb{Q})$ on the $n$-torsion points $E[n]$ of  the elliptic curve $E$, having fixed once and for all an identification of $E[n]$ with $\mathbb{F}_n^2$.  Bennett and Skinner \cite{BS} combined the work of Mazur \cite{M} and Kubert \cite{Ku} to show that if $xy \neq \pm 1$, then $\rho_n^E$ is absolutely irreducible.  So we can associate to the Galois representation  $\rho_n^E$  a number $N_n^E$ called the Artin conductor of  $\rho_n^E$ as it is defined in \cite{S}. If $n \nmid ABC,$ they gave an explicite value of this conductor  $N_n^E$ in the following lemma.
 
\begin{lem}\label{lem3}(Lemma 3.2,  \cite{BS}) We have
$$N_n^E = 2^{\beta}\prod_{p \mid C,~p \neq n} p^2 \prod_{q \mid AB,~q \neq n} q,$$
where
\begin{equation*}
\beta = \left\{
\begin{aligned}
1 & \quad \mbox{if} \quad ab\equiv 0 \pmod 2 \mbox{ and } AB\equiv 1 \pmod 2\\
\alpha& \quad \mbox{otherwise,}
\end{aligned}
\right.
\end{equation*}
for $\alpha$ defined above.
\end{lem}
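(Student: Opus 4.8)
The plan is to read off the Artin conductor $N_n^E$ from the conductor $N(E)$ of Lemma~\ref{lem1} by applying, prime by prime, Serre's recipe relating the conductor of an elliptic curve to that of its mod-$n$ representation. Recall the principle: for a prime $p\neq n$ the exponent of $p$ in $N_n^E$ coincides with its exponent in $N(E)$, \emph{except} when $E$ has multiplicative reduction at $p$ and $n\mid v_p(\Delta_{\min}(E))$, in which case the level is lowered at $p$ and $p\nmid N_n^E$. At the prime $n$ itself one checks that $\rho_n^E$ is finite: since $n\nmid ABC$, the curve has good or multiplicative reduction at $n$, so $n\nmid N_n^E$. Thus everything reduces to computing $\Delta_{\min}(E)$ and the reduction type of each Frey model $E=E_i$ at each ramified prime.

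First I would record the discriminants. Writing $E_1$ as $Y^2=X(X^2+2cCX+BCb^n)$ and using the relation $Cc^2-Bb^n=Aa^n$ coming from the Diophantine equation, one gets $\Delta(E_1)=2^6AB^2C^3(ab^2)^n$, with analogous expressions for $E_2,E_3$ after accounting for the $2$-adic rescalings built into those models. The key structural feature is that every odd prime $p\mid ab$ with $p\nmid ABC$ divides $\Delta$ to the exponent $n\cdot v_p(ab^2)$, which is a multiple of $n$, whereas the primes dividing $ABC$ occur to bounded exponents.

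With these formulas in hand the odd primes are routine. For odd $p\mid ab$, $p\nmid ABC$: coprimality and the shape of $\Delta$ force multiplicative reduction with $n\mid v_p(\Delta_{\min})$, so Serre's recipe deletes $p$; this is exactly why the factor $\prod_{p\mid xyAB}p$ of $N(E)$ shrinks to $\prod_{q\mid AB,\,q\neq n}q$ in $N_n^E$. For odd $p\mid AB$ (so $p\nmid ab$): examining $c_4$ shows the reduction is multiplicative, while $v_p(\Delta_{\min})=v_p(AB^2)\in\{1,2\}$ is not divisible by $n\geq 7$, so $p$ survives with exponent $1$. For $p\mid C$ (squarefree, whence $p\Vert C$ and $C^2\Vert N(E)$): the reduction is additive, and additive reduction is preserved under reduction mod $n$ for $p\neq n$, so $p$ contributes $p^2$. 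Together these account for the factor $\prod_{p\mid C,\,p\neq n}p^2\prod_{q\mid AB,\,q\neq n}q$.

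The delicate point, and the one I expect to be the main obstacle, is the prime $2$, which yields the factor $2^\beta$. The three models $E_1,E_2,E_3$ together with the case division (1)--(5) are arranged precisely so that the chosen model is minimal at $2$ and Tate's algorithm returns $v_2(N(E))=\alpha$ of Lemma~\ref{lem1}. The mod-$n$ exponent $\beta$ then equals $\alpha$ in the additive cases, since additive reduction is unaffected by passing to $\rho_n^E$. In the remaining case $2\mid ab$, $AB\equiv 1\pmod 2$, the relevant model is $E_3$ and the division by $64$ shifts $v_2(\Delta)$ by a non-multiple of $n$, so that $2$ has multiplicative reduction with $n\nmid v_2(\Delta_{\min})$; hence $2$ is \emph{not} lowered and contributes exponent $\beta=1$. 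Carefully tracking minimality, the reduction type at $2$, and the residue of $v_2(\Delta_{\min})$ modulo $n$ across all of (1)--(5) is the heart of the computation; the odd-prime part is then immediate.
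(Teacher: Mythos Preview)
The paper does not actually prove this lemma: it is stated as a direct citation of Lemma~3.2 of Bennett--Skinner \cite{BS}, with no argument given beyond the attribution. So there is no ``paper's own proof'' to compare against; your proposal goes well beyond what the paper itself does.

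That said, your outline is the correct one and matches the standard argument (essentially what \cite{BS} carry out). The prime-by-prime analysis via the discriminant and $c_4$ is right: odd $p\mid ab$ with $p\nmid ABC$ give multiplicative reduction with $n\mid v_p(\Delta_{\min})$ and are lowered; odd $p\mid AB$ give multiplicative reduction with $v_p(\Delta_{\min})\in\{1,2\}$ and survive; $p\mid C$ are additive and survive squared; and $n\nmid ABC$ forces good or multiplicative reduction at $n$, hence finiteness. Your treatment of $2$ is also on target once one notes that in the formula $N(E)=2^{\alpha}C^2\prod_{p\mid xyAB}p$ the prime $2$ already appears in the product when $2\mid ab$; with $AB$ odd and model $E_3$ one finds $v_2(\Delta_{\min})=2n\,v_2(b)-6$, which is not divisible by $n\ge 7$, so the multiplicative contribution $2^1$ survives, giving $\beta=1$. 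In the remaining cases the reduction at $2$ is additive (or already $\alpha=1$), so $\beta=\alpha$. The only caveat is that a fully rigorous write-up requires running Tate's algorithm in each of the subcases (1)--(5) to confirm minimality and reduction type at $2$; your proposal correctly flags this as the main labor.
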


Applying work of Breuil, Conrad, Diamond and Taylor \cite{BCDT} and a consequence of the work of Ribet (Theorem 6.4 \cite{D}), Bennett and Skinner obtained the following result.

\begin{lem}\label{lem2}({Lemma 3.3, \cite{BS}})
Suppose $xy \neq \pm 1.$ Put 
\begin{equation*}
N_n(E) = \left\{
\begin{aligned}
N_n^E & \quad \mbox{if} \quad n \nmid ABC \\
nN_n^E& \quad \mbox{if} \quad n \mid AB \\
n^2N_n^E &\quad \mbox{if} \quad n \mid C.
\end{aligned}
\right.
\end{equation*}
Then the Galois representation $\rho_n^E$ arises from a cuspidal newform of weight $2$ and level $N_n(E)$ and trivial Nebentypus character.
\end{lem}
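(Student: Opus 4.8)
The plan is to obtain the statement as the combination of three ingredients: the modularity of elliptic curves over $\mathbb{Q}$, the absolute irreducibility of $\rho_n^E$, and Ribet's level-lowering theorem in the form of Theorem 6.4 of \cite{D}. First I would invoke the modularity theorem of Breuil, Conrad, Diamond and Taylor \cite{BCDT}, which asserts that every elliptic curve over $\mathbb{Q}$ is modular. Applied to $E = E_i(x,y,z)$, this shows that $\rho_n^E$ arises from a cuspidal newform of weight $2$ and level equal to the conductor $N(E)$ recorded in Lemma \ref{lem1}. Since $E$ is defined over $\mathbb{Q}$, the attached newform has trivial Nebentypus, and this triviality is preserved by each step below.

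The second ingredient is already furnished in the text: under the hypothesis $xy \neq \pm 1$, the combination of Mazur's theorem on rational isogenies with Kubert's parametrizations shows that $\rho_n^E$ is absolutely irreducible. Irreducibility is precisely the hypothesis under which Ribet's theorem applies, so it is what licenses the passage from the level $N(E)$ to a strictly smaller level.

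The heart of the argument is then a prime-by-prime level-lowering analysis. For a prime $p \neq n$ with $p \mid xy$, the Frey curve has multiplicative reduction and $n \mid \mathrm{ord}_p(\Delta_E)$, so inertia at $p$ acts trivially on $E[n]$; hence $\rho_n^E$ is finite at $p$ and Theorem 6.4 of \cite{D} removes $p$ from the level. Iterating over all such $p$ converts the factor $\prod_{p \mid xyAB} p$ of $N(E)$ into $\prod_{q \mid AB,\, q \neq n} q$. Primes dividing the squarefree integer $C$ give additive, potentially good reduction and persist with exponent $2$, matching the factor $\prod_{p \mid C,\, p \neq n} p^2$ of $N_n^E$; the power of $2$ is isolated by comparing the exponent $\alpha$ of Lemma \ref{lem1} with the exponent $\beta$ of Lemma \ref{lem3}, which is exactly the assertion that $\rho_n^E$ is, or is not, finite at $2$. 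Collecting these contributions yields the Artin conductor $N_n^E$.

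Finally, the three cases in the definition of $N_n(E)$ record the behaviour at $n$ itself: when $n \nmid ABC$ the representation is finite at $n$ (good reduction, or multiplicative reduction with $n \mid \mathrm{ord}_n(\Delta_E)$), so no factor of $n$ survives and the level is $N_n^E$; when $n \mid AB$ the reduction forces a single factor $n$; and when $n \mid C$ additive reduction forces $n^2$. I expect the genuine difficulty to lie here and at the prime $2$: stripping the primes $p \mid xy$ is essentially formal, whereas matching the exponents $\alpha$ and $\beta$ at $2$ and pinning down the exact power of $n$ both require a careful application of Tate's algorithm together with the local theory of the Artin conductor of a mod-$n$ representation.
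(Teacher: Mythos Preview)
Your proposal is correct and follows exactly the approach the paper indicates: the paper does not give its own proof of this lemma but merely quotes it from \cite{BS}, prefacing it with the sentence ``Applying work of Breuil, Conrad, Diamond and Taylor \cite{BCDT} and a consequence of the work of Ribet (Theorem 6.4 \cite{D}), Bennett and Skinner obtained the following result.'' Your sketch is precisely a fleshed-out version of that one-line summary---modularity, irreducibility, then prime-by-prime level lowering---so there is nothing to add.
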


$\bullet$ Signature $(p,p,3)$. We assume without loss of generality that $Ax^n \ncong 0 \pmod 3$ and $By^n \ncong 2 \pmod 3$. Furthermore, suppose that $C$ is cubefree, $A$ and $B$ are $n^{th}$-power free. Let us consider the elliptic curve $E'$ given by 
$$E': \;Y^2+3CzXY+C^2By^nY= X^3.$$ 
Bennett, Vatsal and Yazdani  \cite{BVY} showed that the conductor of $E'$ noted $N(E')$ is given by
$$N(E')~=~\epsilon_3  \prod_{p \mid C,~p \neq 3} p^2 \prod_{q \mid ABxy,~q \neq 3}q, $$ 
where 
\begin{equation*}
\epsilon_3= \left\{
\begin{aligned}
3^2& \quad \mbox{if}\quad  9\mid (2+C^2By^n-3Cz)\\
3^3 &\quad \mbox{if} \quad  3\parallel (2+C^2By^n-3Cz) \\
3^4& \quad \mbox{if} \quad  \mbox{ord}_2(By^n)=1 \\
3^3& \quad \mbox{if} \quad  \mbox{ord}_2(By^n)=2\\
1& \quad \mbox{if} \quad \mbox{ord}_3(B)=3 \\
3 &\quad \mbox{if} \quad  \mbox{ord}_3(By^n)> 3\\
3^5& \quad \mbox{if} \quad  3\mid C.
\end{aligned}
\right.
\end{equation*}
The Artin conductor $N_n^{E'}$ associated to the Galois representation $\rho_n^{E'}$ is given by:
$$N_n^{E'}~=~\epsilon_3'  \prod_{p \mid C,~p \neq 3} p^2 \prod_{q \mid AB,~q \neq 3}q, $$ 
where 
\begin{equation*}
\epsilon_3'= \left\{
\begin{aligned}
3^2& \quad \mbox{if}\quad  9\mid (2+C^2By^n-3Cz)\\
3^3 &\quad \mbox{if} \quad  3\parallel (2+C^2By^n-3Cz) \\
3^4& \quad \mbox{if} \quad  \mbox{ord}_2(By^n)=1 \\
3^3& \quad \mbox{if} \quad  \mbox{ord}_2(By^n)=2\\
1& \quad \mbox{if} \quad \mbox{ord}_3(B)=3 \\
3 &\quad \mbox{if} \quad  \mbox{ord}_3(By^n)> 3 \quad and \quad  \mbox{ord}_3(B)\neq 3\\
3^5& \quad \mbox{if} \quad  3\mid C.
\end{aligned}
\right.
\end{equation*}
Suppose that $n\nmid ABC$, then the following lemma is 

\begin{lem}\label{lem3}(Lemma 3.4, \cite{BVY})
	If $xy \neq \pm 1$ and $n\geq 11$ is a prime, then the Galois representation $\rho_n^{E'}$ arises from a cuspidal newform of weight $2$ and level $N_n^{E'}$ and trivial Nebentypus character unless $E'$ corresponds to one of the equations
$$1 \cdot 2^5 + 27 \cdot (-1)^5 = 5 \cdot 1^3 \mbox{ or } 1 \cdot 2^7 + 3 \cdot (-1)^7 = 1 \cdot 5^3.$$
\end{lem}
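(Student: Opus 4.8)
The proof is an instance of the modular method applied to the signature $(p,p,3)$ Frey curve $E'$, and I would organize it in three steps. First, since $E'$ is an elliptic curve over $\mathbb{Q}$, the modularity theorem of Breuil, Conrad, Diamond and Taylor \cite{BCDT} shows that $E'$ is modular; hence $\rho_n^{E'}$ arises from a cuspidal newform of weight $2$, trivial Nebentypus, and level equal to the conductor $N(E')$ recorded above. The entire task is then to descend the level from $N(E')$ to the Artin conductor $N_n^{E'}$, which amounts to removing ramification at the primes $q \mid xy$ with $q \neq 3, n$ while keeping track of the $2$- and $3$-adic contributions.

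Second, and this is where I expect the main difficulty to lie, I would prove that $\rho_n^{E'}$ is irreducible for every prime $n \geq 11$. The key structural observation is that, writing $E'$ as $Y^2 + 3CzXY + C^2By^nY = X^3$, the coefficients $a_4$ and $a_6$ vanish, so the rational point $(0,0)$ is a point of order $3$ and $E'$ carries a rational $3$-isogeny. If $\rho_n^{E'}$ were reducible, then $E[n]$ would contain a Galois-stable line, i.e. $E'$ would also admit a rational $n$-isogeny; since $\gcd(3,n)=1$, composing the two produces a rational cyclic isogeny of degree $3n$, equivalently a non-cuspidal rational point on $X_0(3n)$. By Mazur's classification \cite{M} of the levels $N$ for which $X_0(N)$ possesses a non-cuspidal rational point, the only such $N$ with $3 \mid N$ and $N/3$ prime are $N = 15$ and $N = 21$, forcing $n \in \{5,7\}$. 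For $n \geq 11$ one has $3n \geq 33$, which is excluded, so $\rho_n^{E'}$ is irreducible. Tracing the sporadic values $n=5$ (whence $3n=15$) and $n=7$ (whence $3n=21$) back through the Frey construction yields precisely the two displayed solutions $1\cdot 2^5 + 27(-1)^5 = 5\cdot 1^3$ and $1\cdot 2^7 + 3(-1)^7 = 1\cdot 5^3$, which are therefore the only cases in which the argument can fail.

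Finally, I would carry out the level lowering. For each prime $q \mid xy$ with $q \neq 3, n$, the curve $E'$ has multiplicative reduction and $n \mid \operatorname{ord}_q(\Delta_{E'})$, since $x$ and $y$ enter the minimal discriminant to the $n$-th power; hence $\rho_n^{E'}$ is finite, and in particular unramified, at $q$. With $\rho_n^{E'}$ irreducible and modular, the consequence of Ribet's theorem recorded as Theorem 6.4 of \cite{D} lets us strip these primes from the level, descending from $N(E')$ to $N_n^{E'}$ while preserving weight $2$ and trivial Nebentypus; the matching of the $2$- and $3$-adic exponents is exactly the Tate-algorithm computation already summarized in the formula for $N_n^{E'}$. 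The principal obstacle remains the irreducibility step: controlling all primes $n \geq 11$ uniformly and confirming that the rational points on $X_0(15)$ and $X_0(21)$ are the sole source of the two exceptions.
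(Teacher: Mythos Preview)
The paper itself does not prove this lemma; it is simply quoted from \cite{BVY} (their Lemma~3.4), so there is no in-paper argument to compare against. That said, your sketch is exactly the proof given in \cite{BVY}: modularity via \cite{BCDT}; irreducibility of $\rho_n^{E'}$ from the observation that $(0,0)\in E'(\mathbb{Q})$ has order $3$, so a hypothetical rational $n$-isogeny would produce a non-cuspidal rational point on $X_0(3n)$, which Mazur's theorem forbids once $3n\geq 33$; and then Ribet's level-lowering at the primes dividing $xy$. One clarification worth making: your own irreducibility argument already shows that the two displayed exceptions, having exponents $5$ and $7$, are vacuous under the hypothesis $n\geq 11$ as written here; in \cite{BVY} the corresponding statement is formulated for $n\geq 5$, which is why those two solutions are recorded at all.
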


\section{Eliminating Newforms.}
We will use different methods to eliminate the possibility of certain newforms of level $N_n(E)$, $N_n^{E'}$ giving a rise to the Galois representations $\rho_n^{E}$ and $\rho_n^{E'}$ respectively. See the next propositions. These propositions combine some results in \cite{BS} and \cite{BVY}.
\begin{pro}\label{pro1}(Lemma 4.3, \cite{BS} and Proposition 4.2, \cite{BVY})
	Suppose that $a,b,c,A,B$ and $C$ are nonzero integers with $Aa,Bb,Cc$ coprime, $ab \neq \pm 1$, satisfying 
	$$Aa^n+Bb^n=Cc^m,$$
	with $n\geq 7$ (for $m=2$) or $n\geq 11$ (for $m=3$), where in each case $n$ is a prime. Then there exists a cuspidal newform $f = \sum_{r=1}^{\infty} c_rq^r$ of weight $2$, trivial Nebentypus character and level $N_n(E)$, $N_n^{E'}$ respectively in cases $m=2$ and $m=3$. Moreover, if $K_f$ is the field of definition of Fourier coefficient $c_r$ of $f$ and suppose that $p$ is a prime that is coprime to $nN$, then 
	$$Norm_{K_f\mid \mathbb{Q}}(c_p-a_p) \equiv 0 \pmod n,$$ 
where $a_p= \pm (p+1)$ or $a_p \in S_{p,m}$ with 
	$$S_{p,2}= \{x ~/ \quad \mid x \mid < 2\sqrt{p}, ~~ x \equiv 0 \pmod 2 \}$$ 
and 
	$$S_{p,3}= \{x ~/ \quad  \mid x \mid < 2\sqrt{p}, ~~ x \equiv p+1 \pmod 3 \}.$$
\end{pro}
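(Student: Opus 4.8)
The plan is to prove Proposition \ref{pro1} by assembling the machinery already developed in the preliminaries. I would begin by invoking the modularity lemmas established above for each signature. In the case $m=2$, Lemma \ref{lem2} guarantees that whenever $xy\neq\pm 1$ the Galois representation $\rho_n^E$ arises from a cuspidal newform $f$ of weight $2$, trivial Nebentypus, and level $N_n(E)$; in the case $m=3$, Lemma \ref{lem3} provides the analogous conclusion with level $N_n^{E'}$, valid for $n\geq 11$, modulo the two small exceptional equations listed there. Thus the existence of the newform $f=\sum_{r\geq 1}c_rq^r$ with the stated weight, character, and level is immediate; the substantive content of the proposition is the congruence relating the Fourier coefficients $c_p$ of $f$ to the trace of Frobenius $a_p$ of the Frey curve $E$ (or $E'$) at primes $p$ coprime to $nN$.

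For the congruence, the key input is that "arising from a newform" means the mod $n$ Galois representations agree: the reduction $\bar\rho_f$ attached to $f$ modulo a prime $\mathfrak{n}$ of $K_f$ above $n$ is isomorphic to $\rho_n^E$. First I would fix such a prime $\mathfrak n$ and compare traces of Frobenius at a prime $p$ with $p\nmid nN$. On the newform side the trace of $\bar\rho_f(\mathrm{Frob}_p)$ is the reduction of $c_p$ modulo $\mathfrak n$; on the elliptic-curve side, since $p$ is a prime of good reduction (as $p\nmid N$), the trace of $\rho_n^E(\mathrm{Frob}_p)$ is $a_p\bmod n$, where $a_p=p+1-\#E(\mathbb F_p)$. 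Equality of the two representations forces $c_p\equiv a_p\pmod{\mathfrak n}$, and taking the norm down to $\mathbb{Q}$ over all primes of $K_f$ above $n$ converts this into the integral congruence $\mathrm{Norm}_{K_f\mid\mathbb Q}(c_p-a_p)\equiv 0\pmod n$.

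The remaining task is to identify the possible values of $a_p$. Here I would distinguish the two cases according to the reduction type of $E$ at $p$. If $E$ has good reduction at $p$, then the Hasse bound gives $|a_p|<2\sqrt p$, and one reads off the required parity or congruence condition defining $S_{p,m}$ from the shape of the Frey curve: in the signature $(p,p,2)$ case the two-torsion structure of $E_i$ (the full rational $2$-torsion coming from the factorization $y^2=x(x-\,\cdot\,)(x+\,\cdot\,)$-type models) forces $\#E(\mathbb F_p)\equiv 0\pmod 4$, hence $a_p=p+1-\#E(\mathbb F_p)$ is even, giving $a_p\in S_{p,2}$; in the signature $(p,p,3)$ case the analogous three-torsion information on $E'$ yields $a_p\equiv p+1\pmod 3$, giving $a_p\in S_{p,3}$. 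If instead $E$ has multiplicative reduction at $p$, then $a_p=\pm(p+1)$, accounting for the other alternative in the statement. Collecting these cases completes the proof.

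The step I expect to be the main obstacle is the careful justification of the torsion-divisibility claims that pin down $S_{p,m}$, together with bookkeeping the exceptional cases: one must verify that the Frey curves $E_i$ and $E'$ genuinely carry the rational two- or three-torsion needed so that $4\mid\#E(\mathbb F_p)$ (respectively $3\mid\#E'(\mathbb F_p)$) at good primes, and one must carry along the two exceptional equations from Lemma \ref{lem3} where modularity of $\rho_n^{E'}$ may fail. The modularity and level-lowering parts are black-boxed by the cited lemmas, so the real work lies in this local analysis of reduction types and torsion, which is precisely where the structural hypotheses on $A,B,C$ from the preliminaries are used.
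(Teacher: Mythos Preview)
The paper does not give its own proof of this proposition: it is quoted verbatim from \cite{BS} (Lemma~4.3) and \cite{BVY} (Proposition~4.2) and used as a black box in Sections~\ref{sec5}--\ref{sec7}. Your sketch is exactly the standard argument behind those cited results, so in that sense your approach is correct and agrees with what the references do.

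Two small imprecisions are worth flagging. First, at a prime $p$ of multiplicative reduction for the Frey curve one has $a_p(E)=\pm 1$, not $\pm(p+1)$; the quantity that equals $\pm(p+1)\pmod n$ is the \emph{trace of $\rho_n^E(\mathrm{Frob}_p)$}, and this is where the $n$th-power structure of the Frey discriminant enters (it forces $n\mid\mathrm{ord}_p(\Delta_E)$, so $\rho_n^E$ is unramified at $p$ and Frobenius acts with eigenvalues $\{p,1\}$ or $\{-p,-1\}$). Second, the curves $E_i$ in the $(p,p,2)$ case visibly have the single rational $2$-torsion point $(0,0)$, not full rational $2$-torsion; but one rational $2$-torsion point already gives $2\mid\#E(\mathbb F_p)$ for odd $p$, hence $a_p\equiv p+1\equiv 0\pmod 2$, which is all that is needed for $S_{p,2}$. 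With these adjustments your outline is the correct proof.
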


\begin{pro}\label{pro2}(Proposition 4.4, \cite{BS} and Proposition 4.4, \cite{BVY})
	Suppose that  $n\geq 7$ (for $m=2$) or $n\geq 11$ (for $m=3$) where, in each case, $n$ is a prime. Suppose also that $E"$ is another elliptic curve defined over $\mathbb{Q}$ such that 
	$$ \rho_p^{E} \cong \rho_p^{E"}  \quad \mbox{and} \quad \rho_p^{E'} \cong \rho_p^{E"}  $$ 
respectively. Then the denominator of $j-$invariant $j(E")$ is not divisible by any odd prime $p\neq n$ dividing $C.$
\end{pro}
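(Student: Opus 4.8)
The plan is to argue by contradiction and to reduce the global assertion to a purely local comparison of the two mod-$n$ representations on an inertia subgroup. Suppose then that $p$ is an odd prime with $p\neq n$, $p\mid C$, and $p$ divides the denominator of $j(E'')$; the last condition means $v_p(j(E''))<0$, so that $E''$ acquires multiplicative reduction at $p$ over a (possibly trivial) quadratic extension of $\mathbb{Q}_p$, i.e. $E''$ has potentially multiplicative reduction there. Fixing a decomposition group at $p$ with inertia subgroup $I_p$, I would compute the image $\rho_n^{E''}(I_p)$ on the one hand and $\rho_n^{E}(I_p)$ (resp. $\rho_n^{E'}(I_p)$) on the other, and show that the two subgroups of $GL_2(\mathbb{F}_n)$ cannot be conjugate, contradicting $\rho_n^{E}\cong\rho_n^{E''}$ (resp. $\rho_n^{E'}\cong\rho_n^{E''}$).

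For the $E''$ side I would invoke the theory of the Tate curve. Over $\mathbb{Q}_p$ the curve $E''$ is the twist, by a character $\psi$, of a Tate curve $E_q$ with $v_p(q)=-v_p(j(E''))>0$, where $\psi$ is unramified if $E''$ has multiplicative reduction and ramified quadratic if it has additive potentially multiplicative reduction. The Tate curve satisfies $\rho_n^{E_q}|_{D_p}\sim\left(\begin{smallmatrix}\chi_n & *\\ 0 & 1\end{smallmatrix}\right)$, where $\chi_n$ is the mod-$n$ cyclotomic character (unramified at $p$ since $p\neq n$) and the extension class $*$ restricted to $I_p$ is nonzero exactly when $n\nmid v_p(q)$. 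Twisting by $\psi$ and restricting to $I_p$ then yields the dichotomy: $\rho_n^{E''}(I_p)$ either contains an element of order $n$ (precisely when $n\nmid v_p(j(E''))$, the unipotent contribution surviving) or has order at most $2$ (when $n\mid v_p(j(E''))$, only the at-most-quadratic twist character surviving).

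For the Frey side I would use that, at an odd prime $p\mid C$, the curve $E$ (resp. $E'$) has additive but potentially good reduction. Potential goodness follows from $v_p(j)\geq 0$, which I would read off from the explicit discriminant: for signature $(p,p,2)$ one finds $\Delta=2^{t}AB^2C^3a^nb^{2n}$ using $Aa^n=Cc^2-Bb^n$, whence $v_p(\Delta)=3v_p(C)=3$ as $C$ is squarefree; for $E'$ one finds $\Delta=27\,AB^3C^8x^ny^{3n}$ using $Ax^n=Cz^3-By^n$, whence after minimalisation $v_p(\Delta_{\min})\in\{8,4\}$ according as $v_p(C)\in\{1,2\}$. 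Since moreover $p^2\parallel N_n^{E}$ (resp. $p^2\parallel N_n^{E'}$) by the Artin-conductor formulae recalled above, the reduction is tamely additive, so $I_p$ acts on the $n$-torsion through a cyclic group of order $e=12/\gcd(12,v_p(\Delta_{\min}))$, giving $e=4$ in the quadratic case and $e=3$ in the cubic case.

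The two computations are now incompatible. The group $\rho_n^{E}(I_p)$ (resp. $\rho_n^{E'}(I_p)$) is cyclic of order $e\in\{3,4\}$, so its order is $>2$ and coprime to $n$ (as $n\geq 7$), and in particular it contains no element of order $n$. This contradicts both alternatives for $\rho_n^{E''}(I_p)$, which must have order either divisible by $n$ or at most $2$; hence no such $p$ can divide the denominator of $j(E'')$. The step I expect to be the crux is excluding the borderline value $e=2$: were a Frey curve of type $I_0^\ast$ at $p$, its inertia image would be the scalar group $\{\pm I\}$, entirely compatible with the $n\mid v_p(j(E''))$ case, and no contradiction would ensue. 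It is exactly the rigid shape of the Frey discriminant—forcing $v_p(\Delta_{\min})=3$ (quadratic) or $\in\{8,4\}$ (cubic), hence $e\in\{3,4\}$—that rules this out, so I would be careful to justify minimality of the model at $p$ (immediate since $v_p(\Delta)<12$ in the squarefree case) and, in the quadratic setting with $3\mid C$, to note that $e=4$ is prime to $3$, so the reduction stays tame and the cyclic-of-order-$e$ description of $\rho_n^{E}(I_p)$ persists even at $p=3$.
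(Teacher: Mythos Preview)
The paper does not supply its own proof of this proposition; it is simply quoted from Bennett--Skinner \cite{BS} and Bennett--Vatsal--Yazdani \cite{BVY}, so there is nothing in the present paper to compare against directly. That said, your sketch is precisely the argument given in those references: compare the image of inertia at an odd prime $p\mid C$, $p\neq n$, using that the Frey curve has additive potentially good reduction there (with $v_p(\Delta_{\min})\in\{3\}$ in the quadratic case and $\{8,4\}$ in the cubic case, forcing $e\in\{4,3\}$), while a curve $E''$ with $v_p(j(E''))<0$ has potentially multiplicative reduction, so its inertia image either contains an element of order $n$ or has order at most $2$. Your identification of the key point---that $e>2$ is what excludes the borderline situation---is exactly right.

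One small caution: your justification for tameness at $p=3$ in the quadratic case is phrased a bit circularly (you invoke $e=4$ to argue tameness, but the formula $e=12/\gcd(12,v_p(\Delta_{\min}))$ presupposes $p\geq 5$). In fact, for $p=3$ the inertia image could in principle be larger than cyclic of order $4$; but this does no harm, since all you need is that the image has order strictly greater than $2$ and contains no element of order $n\geq 7$, both of which remain true for any finite image arising from potentially good reduction. With that minor rewording your argument is complete and matches the cited sources.
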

Notice that Bennett \cite{B} used these two propositions to show that for an odd $n\geq 3$, the Thue equation 
$$x^n-3y^n=2$$ 
has only one integral solution $(x, y)=(-1,-1)$. 

Now, we use the technique developed by the authors in \cite{NT} for solving an exponential Diophantine equation to completely prove Theorem 1. 

\begin{lem}\label{lem4}( Corollary 6.3.15, \cite{C})
	Let $p$ be a positive or negative prime number with $p \neq 2$.   Then the general integral solution of the equation 
	$$x^2+py^2=z^2$$ 
	with $x$ and $y$ coprime is given by one of the following two disjoint parameterizations:\\
	\begin{enumerate}
		\item $x = \pm (s^2-pt^2), ~~y=2st,~~z= \pm (s^2+pt^2)$, where $s$ and $t$ are coprime of the opposite parity such that $p\nmid s.$\\
		\item $x = \pm (((p-1)/2)(s^2+t^2)+ (p+1)st), ~~y=s^2-t^2,~~z= \pm (((p+1)/2)(s^2+t^2)+ (p-1)st)$, where $s$ and $t$ are coprime of the opposite parity such that $s$ not congruent to $t$ modulo $p$.
	\end{enumerate}
\end{lem}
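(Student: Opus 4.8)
The plan is to turn the conic equation into a factorization problem over $\mathbb{Z}$ and then apply unique factorization. Writing the equation as $(z-x)(z+x)=py^2$, I would first record that $\gcd(x,z)=1$: any common prime divisor $\ell$ of $x$ and $z$ divides $py^2$, and since $\gcd(x,y)=1$ rules out $\ell\mid y$, the only option is $\ell=|p|$; but $p\mid x$ and $p\mid z$ would force $p^2\mid z^2-x^2=py^2$, hence $p\mid y$, a contradiction. Consequently $d:=\gcd(z-x,z+x)$ divides $2\gcd(x,z)=2$, so $d\in\{1,2\}$, and which value occurs is dictated by the parity of $y$ (recall $p$ is odd).

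Next I would separate the two parameterizations according to this parity. If $y$ is even, then $z$ and $x$ are both odd, so $z-x$ and $z+x$ are both even and $d=2$; setting $a=(z-x)/2$ and $b=(z+x)/2$ gives coprime integers of opposite parity with $ab=p(y/2)^2$. Since $p$ is prime and $\gcd(a,b)=1$, unique factorization forces $b=\pm s^2$ and $a=\pm pt^2$ with $y/2=\pm st$ and $\gcd(s,t)=1$; recovering $x=b-a$ and $z=a+b$ then yields $x=\pm(s^2-pt^2)$, $y=2st$, $z=\pm(s^2+pt^2)$, which is parameterization (1). If instead $y$ is odd, then $z-x$ and $z+x$ are odd and coprime ($d=1$) with product $py^2$, so the prime $p$ lands entirely in one factor and each factor is, up to sign, a square or $p$ times a square; writing $z-x=e^2$ and $z+x=pf^2$ with $ef=\pm y$ and substituting $e=s-t$, $f=s+t$ reproduces, after expanding $2z=(s-t)^2+p(s+t)^2$ and $2x=p(s+t)^2-(s-t)^2$, exactly parameterization (2).

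The delicate part is the bookkeeping that matches the side conditions in the statement to primitivity of the solution. In case (1) the requirement $p\nmid s$ corresponds to $p$ dividing $a$ rather than $b$, i.e. to the prime $p$ sitting in $z-x$, while the coprimality and opposite parity of $s,t$ are what prevent $x,y,z$ from acquiring a common factor; in case (2) the condition $s\not\equiv t\pmod p$ is precisely $p\nmid(s-t)=e$, which is needed so that $p$ does not also divide the square factor $z-x$ (otherwise $p$ would divide $x$, $y$, and $z$ simultaneously). I would also verify that the various sign and ordering choices—including whether $p$ occupies $z-x$ or $z+x$ in the odd case, which is interchanged by $x\mapsto -x$—are all absorbed into the $\pm$ signs in front of $x$ and $z$, and confirm the two families are disjoint, which is immediate since $y=2st$ is even in (1) whereas $y=s^2-t^2$ is odd in (2). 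The main obstacle is thus not any single hard estimate but the exhaustive case analysis ensuring that every primitive solution appears in exactly one parameterization with exactly the stated constraints.
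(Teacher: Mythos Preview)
The paper does not prove this lemma at all: it is quoted verbatim as Corollary~6.3.15 of Cohen's book and used as a black box in the proof of Proposition~\ref{pro3}. So there is no ``paper's own proof'' to compare against.

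Your argument is the standard one and is essentially correct. Rewriting the equation as $(z-x)(z+x)=py^2$, establishing $\gcd(x,z)=1$, and then splitting according to the parity of $y$ (equivalently, whether $\gcd(z-x,z+x)$ equals $1$ or $2$) is exactly how such parameterizations are obtained; the substitution $e=s-t$, $f=s+t$ in the odd-$y$ case is the clean way to land on the stated formulas, and your identification of the side conditions $p\nmid s$ and $s\not\equiv t\pmod p$ with primitivity is right. Two small points worth tightening if you write this out in full: first, since the statement allows $p$ to be negative, the step ``$ab=p(y/2)^2$ forces $b=\pm s^2$, $a=\pm pt^2$'' needs the $\pm$ signs to do real work (the product can be negative), so you should make explicit that the unit $\pm1$ is distributed between the two factors; second, in the odd case you should state up front that the two subcases ``$p$ divides $z-x$'' versus ``$p$ divides $z+x$'' are interchanged by $x\mapsto -x$ before fixing $z-x=e^2$, since as written that equality presupposes a choice. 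These are bookkeeping issues, not gaps in the idea.
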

		  
We will prove the next proposition.
\begin{pro} \label{pro3}
	Let $b \in \mathbb{N}$ such that $(b,2)=1$.  Then the Diophantine Equation $$2^{2x}-b^y= \pm 3z^2$$ has no solution in positive integers $(x,y,z)$ with $y$ even integer and $x> 1$. 
\end{pro}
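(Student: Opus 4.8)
The plan is to exploit the hypothesis that $y$ is even in order to turn the equation into a Pythagorean-type relation $u^2+3v^2=w^2$, to which Lemma \ref{lem4} applies with $p=3$. Writing $y=2k$ and setting $B=b^{k}$ and $X=2^x$, the equation $2^{2x}-b^y=\pm 3z^2$ becomes $X^2-B^2=\pm 3z^2$. This splits into two cases: the plus sign gives $B^2+3z^2=X^2$, and the minus sign gives $X^2+3z^2=B^2$. In both cases one of the two legs is the power of two $X=2^x$ while the other leg $B=b^{k}$ is odd (because $b$ is odd); moreover the equation forces $z$ to be odd, since the left-hand side $2^{2x}-b^y$ is even minus odd, hence odd, so $\pm 3z^2$ is odd. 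Thus every quantity is pinned down parity-wise before the lemma is applied.

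Before invoking Lemma \ref{lem4} I would verify the coprimality it requires. In the first case a common prime divisor of $B$ and $z$ would divide $X^2=B^2+3z^2$, hence divide $X=2^x$, forcing it to be $2$; but $B$ and $z$ are odd, so $\gcd(B,z)=1$. In the second case $\gcd(X,z)=1$ is immediate, $X$ being a power of $2$ and $z$ odd. Hence in each case the two legs are coprime, and Lemma \ref{lem4} with $p=3$ genuinely lists all solutions through its two disjoint parameterizations, where the variable carrying the coefficient $3$, namely our $z$, plays the role of the middle variable $y$ in the lemma.

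The elimination then proceeds by parity. In parameterization $(1)$ the middle variable equals $2st$, which is even, contradicting the fact that $z$ is odd; so this branch never occurs. In parameterization $(2)$, specialized to $p=3$, the two remaining components are $\pm\bigl((s^2+t^2)+4st\bigr)$ and $\pm 2\bigl(s^2+t^2+st\bigr)$, and the requirement that $s,t$ be coprime of opposite parity makes both $s^2+t^2+st$ and $s^2+4st+t^2$ odd. In the minus case this forces $2^x=\pm(s^2+4st+t^2)$, an even number equal to an odd one; in the plus case it forces $2^{x-1}=\pm(s^2+t^2+st)$, again even (here the hypothesis $x>1$ guarantees that $2^{x-1}$ is even) equal to an odd number. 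Each is a contradiction, so parameterization $(2)$ is also impossible, and the proposition follows.

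I expect the only genuine subtlety to lie in the bookkeeping: matching our equation to the exact roles of $x,y,z$ in Lemma \ref{lem4}, confirming the coprimality so that the two parameterizations really exhaust all solutions, and identifying where the hypothesis $x>1$ is indispensable (it is needed precisely in the plus case, where the power of two surfaces as $2^{x-1}$). The parity computations themselves are routine once this correspondence is fixed.
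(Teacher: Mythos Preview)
Your proof is correct and follows essentially the same route as the paper: rewrite the equation as a Pythagorean-type relation, apply Lemma~\ref{lem4}, discard parameterization~(1) because it forces $z$ even, and kill parameterization~(2) by the parity of $s^2+t^2+4st$ and $s^2+t^2+st$. The only cosmetic difference is that the paper keeps $2^x$ in the same slot and lets $p=\pm 3$ vary, whereas you fix $p=3$ and swap the roles of $2^x$ and $b^k$ according to the sign; the resulting parity contradictions (and the place where $x>1$ is used) are identical.
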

\begin{proof}
We suppose that this equation has a solution in positive integers $(x,y,z)$ with $y$ even integer and $x> 1$. Put $y=2m$, with $m$ be a positive integer. Then,  we get  $$(2^x)^2 \pm 3 z^2= (b^m)^2.$$ As $(2,b)=1,$ it follows that $(2^x,z)=1.$ So by Lemma \ref{lem4}, we have the following two possibilities:\\
\begin{itemize}
	\item $2^x = \pm (s^2 \pm 3t^2), ~~z=2st,~~b^m = \pm (s^2 \mp 3t^2)$, where $s$ and $t$ are coprime of the opposite parity such that $p\nmid s.$\\
	
	\item $2^x = \pm (((\pm 3-1)/2)(s^2+t^2)+ (\pm 3+1)st), ~~z=s^2-t^2,~~b^m= \pm (((\pm 3+1)/2)(s^2+t^2)+ (\pm 3-1)st)$, where $s$ and $t$ are coprime of the opposite parity such that $s$ not congruent to $t$ modulo $3$.
\end{itemize}
The first case is not possible because $z$ is odd.  So we have 
$$2^x = \pm (((\pm 3-1)/2)(s^2+t^2)+ (\pm 3+1)st).$$ 
Thus, we obtain that $2 \parallel \pm (((\pm 3-1)/2)(s^2+t^2)+ (\pm 3+1)st)$ or $2 \nmid 2^x$. This implies $ 2 \parallel 2^x$ or $2 \nmid 2^x$, which is impossible because $4\mid 2^x.$ So our proposition is proved.
\end{proof}

\section{Proof of Theorem \ref{thm1}}\label{sec5}

Suppose that  equation \eqref{eq:3} has a solution in nonzero coprime integers $(x,y,z)$, with $xy \equiv 1 \pmod 2$ and a positive integer $\alpha$.
\begin{itemize}
	\item Case $xy \neq \pm 1.$ The elliptic curve that we consider is just $E=E_3(x,y,z)$ and $N_n(E)= 45$. By Lemma \ref{lem2}, it follows that there exists  a newform of weight $2$, level $45$ and trivial Nebentypus character. This newform corresponding to the curve over $\mathbb{Q}$ is $45A$ in Cremona's table \cite{C}. Since this curve has a $j$-invariant with denominator $15$, then we may apply Proposition \ref{pro2} to conclude as desired.
	\item Case $xy = \pm 1.$ If $xy = -1$, then equation \eqref{eq:3} becomes $64-5^{\alpha}= \pm 3z^2.$ By Proposition \ref{pro3}, it follows that $\alpha$ is odd. So we have $64-5^{\alpha} \equiv 5 \pmod 6.$ Moreover, $\pm 3z^2 \equiv 0, \pm 3 \pmod 6$ and so $5 \equiv 0, \pm 3 \pmod 6$, which is a contradiction.  If  $xy = 1$, then equation \eqref{eq:3} becomes $64+5^{\alpha}= \pm 3z^2.$ We have $64+5^{\alpha} \equiv -1 \pmod 5$ and $\pm 3z^2 \equiv 0, \pm 3 \pmod 5.$ So we obtain $-1 \equiv 0, \pm 3 \pmod 5$, which is a contradiction. So the proof of Theorem \ref{thm1} is complete. 
\end{itemize}

\section{Proof of Theorem \ref{thm2}}\label{sec6}
Suppose that equation \eqref{eq:4} has solution in nonzero coprime integers $(x,y,z)$ with $\alpha$ positive integer when $n\geq 11.$ 
\begin{itemize}
\item Case $xy \neq \pm 1$. The elliptic curve that we consider is $E'$ and $N_n(E)=98$. Lemma \ref{lem3} tells us $\rho_n^{E'}$ arised from the newform of weight $2$, level $98$ and trivial character. At this level, there are $2$ classes of newforms to consider by Stein's database \cite{S}, denoted by $98,1;~98,2.$ For $98,1$; we have $c_3=2.$ So we deduce a contradiction to Proposition \ref{pro1} by a consideration  of a single Fourier coefficient. For $98,2$; we have $c_3=\theta$, where  $\theta$ is a root of polynomial $x^2-2.$ So, by Proposition \ref{pro1}, the prime $n$ must divide 
$$\mid Norm_{K_f\mid \mathbb{Q}}(c_p-a_p)\mid  \in \{1,14\},$$ 
where $K_f= \mathbb{Q}(\sqrt{2}) $. We get a contradiction as $n\geq 11.$
\item Case $xy = \pm 1$. In this case, equation \eqref{eq:4} becomes 
$$(\pm 7z)^3= 49 \times 2^{\alpha} \pm 1323.$$
If $\alpha$ is even, then the above equation becomes 
$Y^2=X^3 \pm 1323,$ where $X=\pm 7z$ and $Y=7\times 2^m,$ with $\alpha =2m, ~~m\geq1.$ Using Magma, we get that this Mordell equation has integral solutions 
$$(X,Y) \in \{( -3, \pm 36) \}.$$
 This is impossible. If $\alpha$ is odd, $\alpha= 2m+1,$ with $m \geq 1$, then equation \eqref{eq:4} becomes $Y^2=X^3 \pm 10584,$ where $X=\pm 14z$ and $Y=28 \times 2^m.$ Using Magma, we get that this Mordell equation has integral solutions 
$(X,Y) \in \{(22, \pm 8), (25, \pm 71),(42, \pm 252), (105, \pm 1071), (294, \pm 5040), (394, \pm 7820) \}.$\\	 
This gives no solution to the initial equation and completes the proof of Theorem \ref{thm2}.
\end{itemize}

\section{Proof of Theorem \ref{thm3}}\label{sec7}
Assume that equation \eqref{eq:5} has a nonzero coprime integer solution $(x,y,z)$ such that $n\geqslant 11,$ $n \neq 13$ is a prime number.
\begin{itemize}
	\item Case $xy \neq \pm 1.$ The elliptic curve is $E'$ and $N_n(E')=338$. So Lemma \ref{lem3} tells us $\rho_n^{E'}$ arised from the newform of weight $2$, level $338$ and trivial character. At this level, there are $8$ classes of newforms by Stein's table \cite{S}, denoted by $338,1;~338,2; ~338,3; ~338,4;  ~338,5~ 338,6;$ $~ 338,7; ~ 338,8.$ For $338,1;~338,2;~338,4;$ and we have $c_3= 0, -3.$ So by Proposition \ref{pro1}, we get a contradiction. For $338,3;~338,5;~338,6$; one can see that $c_7= \pm 3, 1$, which contradicts Proposition \ref{pro1}. For newforms $338,7;~338,8;$ we have $c_3= \theta$ and $c_5= \pm (2\theta^2 - 12)$, where $\theta$ is a real root of polynomial $x^3-3x^2-4x+13.$ We compute $ Norm_{K_f \mid \mathbb{Q}}(c_p-a_p) $ and find that 
	$$\mid  Norm_{K_f \mid \mathbb{Q}}(c_p-a_p) \mid \in \{7,13,83\}.$$ 
We must obtain $n=83$ as $n\geq 11$ and $n \neq 13.$ Therefore, $\theta \equiv 4 \pmod \wp $ for $\wp$ a prime lying above $83$ (Notice that this prime is $\wp = \theta - 4$ according to Alaca and William \cite{A}). Since $c_5 = \pm (2\theta^2 - 12),$ then for these newforms it follows that $a_5 \equiv \pm 20 \pmod {83}$. This means that 
	$$ 0, \pm 3, \pm 6 \equiv \pm 20 \pmod {83},$$ 
which is impossible.
	\item Case $xy = \pm 1$. In this case, equation \eqref{eq:5} becomes 
	$$(\pm 13z)^3= 169 \times 2^{\alpha} \pm 4563.$$
	If $\alpha$ is even, then the above equation is 
	$Y^2=X^3 \pm 4563$, where $X=\pm 13z$ and $Y=13 \times 2^m,$ with $\alpha =2m, ~~m\geq1.$ Using Magma, we get that this Mordell equation has the integral solutions 
	$$(X,Y) \in \{(39, \pm 234) \}.$$ 
	This doen't give any solution to the initial equation. If $\alpha$ is odd, $\alpha= 2m+1,$ with $m \geq 1$, then equation \eqref{eq:5} becomes $Y^2=X^3 \pm 36504,$ where $X=\pm 26z$ and $Y=52 \times 2^m.$ Using Magma, we obtain that this Mordell equation has the integral solutions 
		$$(X,Y) \in \{(30, \pm 252) \}.$$ 
	We deduce no solution and this completes the proof of Theorem \ref{thm3}.
\end{itemize}

\bibliographystyle{plain}

\end{document}